\documentclass[letterpaper]{article}
\usepackage{aaai18}
\usepackage{times}
\usepackage{helvet}
\usepackage{courier}
\usepackage{url}
\usepackage{graphicx}
\frenchspacing
\usepackage{amssymb,amsthm}
\usepackage{mathtools}
\usepackage[T1]{fontenc}
\usepackage{microtype}
%
\setlength{\pdfpagewidth}{8.5in}
\setlength{\pdfpageheight}{11in}
%
%
%
\pdfinfo{
/Title (A New Form of Williamson's Product Theorem)
/Author (Curtis Bright)
/Keywords (Williamson matrices)
}
%
\setcounter{secnumdepth}{0}
%
\title{A New Form of Williamson's Product Theorem}
\author{Curtis Bright\\University of Waterloo}

\newcommand{\Z}{\mathbb{Z}}

\newtheorem{theorem}{Theorem}
\newtheorem{corollary}{Corollary}

\nocopyright

\begin{document}
\maketitle
\begin{abstract}
A form of Williamson's product theorem which applies to
Williamson matrices of even order is presented.
\end{abstract}
Four symmetric and circulant $n\times n$ matrices with $\pm1$ entries
are known as \emph{Williamson matrices} if they satisfy
\[ A^2 + B^2 + C^2 + D^2 = 4nI_n \]
where $I_n$ is the $n\times n$ identity matrix.  Such matrices were
first introduced by Williamson, who showed that they can be used
to constuct a Hadamard matrix of order~$4n$
and derived properties which such matrices must satisfy~\cite{williamson1944hadamard}.
Since Williamson matrices are circulant they are defined in terms of their
first row, e.g.,
$A=\operatorname{circ}(a_0,\dotsc,a_{n-1})$.
Because of this it is convenient to instead think of $A$, $B$, $C$, $D$ as four
sequences of length $n$ and refer to them as
\emph{Williamson sequences}~\cite{brightthesis}.

Using this terminology, Williamson proved the following result about
the entries of Williamson sequences which we call \emph{Williamson's product theorem}.

\begin{theorem}\label{thm:willprododd}
If\/ $A$, $B$, $C$, $D$ is a Williamson sequence of odd order\/ $n$ then\/
$a_ib_ic_id_i=-a_0b_0c_0d_0$ for\/ $1\leq i<n/2$.
\end{theorem}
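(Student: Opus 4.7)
The plan is to work in the group ring $\Z[x]/(x^n-1)$, where the Williamson matrix identity becomes
\[
A(x)^2 + B(x)^2 + C(x)^2 + D(x)^2 = 4n
\]
with $A(x) = a_0 + a_1 x + \cdots + a_{n-1} x^{n-1}$, and symmetry of the sequences forces $A(\omega)\in\mathbb{R}$ for every $n$-th root of unity $\omega$. Set $\tilde a_i = (1-a_i)/2\in\{0,1\}$ and $T(x) = \tilde A(x) + \tilde B(x) + \tilde C(x) + \tilde D(x)$. Since $a_i b_i c_i d_i = (-1)^{\tilde a_i + \tilde b_i + \tilde c_i + \tilde d_i}$, the target identity $a_ib_ic_id_i = -a_0b_0c_0d_0$ for $1\le i<n/2$ becomes, after using symmetry to extend to $n/2<i\le n-1$, the statement that modulo~$2$ the polynomial $T(x)$ equals either $1$ or $1+J(x)$, where $J(x)=1+x+\cdots+x^{n-1}$.

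The core of the argument is a local computation at each nontrivial $n$-th root of unity $\omega$. Since $J(\omega)=0$ and $A(x)=J(x)-2\tilde A(x)$, one has $A(\omega)=-2\tilde A(\omega)\in 2\Z[\omega]$, and similarly for the other sequences. Dividing the Williamson identity by $4$ yields
\[
\tilde A(\omega)^2 + \tilde B(\omega)^2 + \tilde C(\omega)^2 + \tilde D(\omega)^2 = n
\]
inside the ring $R=\Z[\omega+\omega^{-1}]$ of totally real cyclotomic integers. Squaring $T(\omega)$ and reducing modulo~$2$ kills the cross terms, giving $T(\omega)^2\equiv n\equiv1\pmod 2$, equivalently $(T(\omega)-1)^2\equiv 0\pmod 2$.

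The decisive step is then to promote this nilpotency to an equality. Because $n$ is odd, each cyclotomic polynomial $\Phi_d$ with $d\mid n$ is separable over $\mathbb{F}_2$, so $2$ is unramified in $R$ and the quotient $R/2R$ is reduced. Therefore the nilpotent $T(\omega)-1$ vanishes modulo~$2$, giving $T(\omega)\equiv1\pmod 2$ at every nontrivial $n$-th root of unity $\omega$.

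To finish, the local information must be reassembled globally. Using the Chinese remainder decomposition of $\mathbb{F}_2[x]/(x^n-1)$ as a product of fields indexed by the irreducible factors of $x^n-1$ over $\mathbb{F}_2$, the previous paragraph forces $T\equiv 1$ in every non-trivial factor. The ideal of $\mathbb{F}_2[x]/(x^n-1)$ consisting of elements vanishing at all nontrivial roots of unity is the two-element ideal $\{0,J(x)\}$, so $T(x)\in\{1,1+J(x)\}$, which is exactly the parity dichotomy required. The subtlest point is the reducedness of $R/2R$: this is the one place where the odd-$n$ hypothesis really enters, and without it surviving nilpotents would block the passage from $(T(\omega)-1)^2\equiv 0$ to $T(\omega)\equiv 1 \pmod 2$.
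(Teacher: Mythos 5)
Your proof is correct, and it takes a genuinely different route from the classical one (the paper does not reprove this theorem --- it cites Williamson --- but its own proof of the even-order analogue, following Hall, is the standard template). The classical argument stays entirely inside the group ring and compares coefficients: Hall's identity $P_A^2+P_B^2+P_C^2+P_D^2=(p_A+p_B+p_C+p_D-n)\sum_{i}u^i+n$, the Frobenius congruence $P_X^2\equiv\sum_{x_i=1}u^{2i}\pmod 2$, and the fact that $i\mapsto 2i$ permutes $\Z/n\Z$ for odd $n$ together show that the number of $+1$s in position $j$ across the four sequences has one parity at $j=0$ and the opposite parity for all $j\neq0$, because the constant coefficient of Hall's identity exceeds the others by the odd number $n$. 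You instead diagonalize: evaluating at each nontrivial root of unity yields $T(\omega)^2\equiv n\equiv 1\pmod 2$, you invoke reducedness of the cyclotomic integers modulo $2$ (separability of $x^n-1$ over $\mathbb{F}_2$) to extract $T(\omega)\equiv 1$, and you reassemble via the Chinese remainder decomposition of $\mathbb{F}_2[x]/(x^n-1)$ together with the observation that the ideal of elements vanishing in every nontrivial factor is $\{0,J\}$. Both proofs consume the same two ingredients and both use oddness of $n$ essentially, but in different guises: you use it for unramifiedness and reducedness, while the classical proof uses it to make doubling a bijection on indices. Your version costs more machinery but cleanly isolates why the statement fails for even $n$ (nilpotents survive in $\mathbb{F}_2[x]/(x^n-1)$), which is exactly the obstruction that Theorem~\ref{thm:willprodeven} is designed to work around. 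Two points worth tightening: (i) transferring $T(\omega)\equiv1\pmod{2\Z[\omega]}$ to the image of $T$ in a finite-field CRT factor compares two reductions of $\Z[x]/(x^n-1)$ that agree only up to a Frobenius automorphism --- harmless because $1$ is Galois-fixed, but it deserves a sentence; (ii) the detour through $\Z[\omega+\omega^{-1}]$ is unnecessary, since $\Z[\omega]/2$ is already reduced for odd $n$.
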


We now prove a version of this theorem for even $n$:

\begin{theorem}\label{thm:willprodeven}
If\/ $A$, $B$, $C$, $D$ is a Williamson sequence of even order\/ $n=2m$ then\/
$a_ib_ic_id_i=a_{i+m}b_{i+m}c_{i+m}d_{i+m}$ for\/ $0\leq i<m$.
\end{theorem}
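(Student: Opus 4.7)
The plan is to encode the $\pm1$-valued sequences by binary indicators and reduce the Williamson equation modulo $2$ in $\mathbb{F}_2[t]/(t^n-1)$, where the evenness of $n=2m$ will make Frobenius squaring pair up the indices $i$ and $i+m$. First I would write $a_i=1-2\alpha_i$ with $\alpha_i\in\{0,1\}$, and similarly introduce $\beta_i,\gamma_i,\delta_i$. Setting $f_i=\alpha_i+\beta_i+\gamma_i+\delta_i$, one has $a_ib_ic_id_i=(-1)^{f_i}$, so the theorem becomes the parity claim $f_i\equiv f_{i+m}\pmod{2}$ for all $0\le i<m$. To derive this, I would interpret the Williamson equation $A^2+B^2+C^2+D^2=4nI_n$ as the polynomial identity $a(t)^2+b(t)^2+c(t)^2+d(t)^2\equiv 4n\pmod{t^n-1}$, where $a(t)=\sum_{i=0}^{n-1}a_it^i$ (and analogously for $b,c,d$), using that circulant multiplication corresponds to multiplication in $\Z[t]/(t^n-1)$.

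Substituting $a(t)=S(t)-2\alpha(t)$ with $S(t)=1+t+\cdots+t^{n-1}$, and using the routine identities $S(t)^2\equiv nS(t)$ and $S(t)\alpha(t)\equiv w_AS(t)$ modulo $t^n-1$ (where $w_A=\sum_i\alpha_i$), expanding and dividing by $4$ yields
\[ \alpha(t)^2+\beta(t)^2+\gamma(t)^2+\delta(t)^2 \equiv n+(W-n)S(t) \pmod{t^n-1}, \]
where $W=w_A+w_B+w_C+w_D$. Reducing modulo $2$ and using that $n$ is even, the right-hand side collapses to $W\cdot S(t)$, and because squaring is additive in characteristic $2$ the left-hand side collapses to $f(t)^2$, where $f(t)=\sum_if_it^i\in\mathbb{F}_2[t]/(t^n-1)$. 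The payoff is now immediate: over $\mathbb{F}_2$ we have $f(t)^2=\sum_if_it^{2i}$, and because $n=2m$ the exponents $2i$ and $2(i+m)$ coincide modulo $n$, so reducing modulo $t^n-1$ gives $f(t)^2\equiv\sum_{j=0}^{m-1}(f_j+f_{j+m})\,t^{2j}$. Comparing this with $W\cdot S(t)$ coefficient by coefficient---the latter having $W\bmod 2$ at every power---the vanishing of odd-power coefficients on the left forces $W\equiv 0\pmod{2}$, and the even-power comparison then yields $f_j+f_{j+m}\equiv 0\pmod{2}$ for every $0\le j<m$, which is precisely the desired identity.

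The only real obstacle is bookkeeping: carefully expanding $\sum_X(S-2\alpha_X)^2$, verifying the routine identities $S(t)^2\equiv nS(t)$ and $S(t)\alpha(t)\equiv w_AS(t)$, and checking that a factor of $4$ divides cleanly through before the mod-$2$ reduction is taken. The crucial fact is that $n$ is even: only then does $n$ vanish in $\mathbb{F}_2$, and only then does the squaring map on $\Z/n\Z$ have fibers $\{j,j+m\}$ above the even residues---which is presumably why Williamson's original theorem for odd $n$ requires an entirely different statement.
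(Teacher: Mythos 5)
Your proof is correct and follows essentially the same route as the paper: both pass to $0/1$ indicator sequences, reduce the resulting sum-of-squares identity modulo $2$, and use Frobenius squaring together with the fact that $2i\equiv 2(i+m)\pmod{n}$ to pair up the indices $i$ and $i+m$, with the vanishing of the odd-power coefficients pinning down the constant. The only differences are cosmetic: you use the indicator of the $-1$ entries and derive the identity $\alpha(t)^2+\beta(t)^2+\gamma(t)^2+\delta(t)^2\equiv n+(W-n)S(t)$ from scratch, whereas the paper uses the $+1$ indicator and cites the corresponding identity from Hall.
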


\begin{proof}
We can equivalently consider members of Williamson sequences to be
elements of the group ring $\Z[C_n]$ where $C_n$ is a cyclic group
of order $n$ with generator $u$.  In such a formulation we have
$X=x_0+x_1u+\dotsb+x_{n-1}u^{n-1}$ and Williamson sequences are
quadruples $(A,B,C,D)$ whose members have $\pm1$ coefficients,
whose coefficients form symmetric sequences of length $n$, and which satisfy
\[ A^2 + B^2 + C^2 + D^2 = 4n . \]
Let $P_X=\sum_{x_i=1} u^i$ (with the sum over $0\leq i<n$) and let $p_X$ denote the number of positive
coefficients in $X$.
As shown in~\cite[14.2.20]{hall1998combinatorial} we have that
$P_A^2 + P_B^2 + P_C^2 + P_D^2$ is equal to
\[ (p_A+p_B+p_C+p_D-n)\sum_{i=0}^{n-1}u^i + n . \tag{1}\label{eq:prodeq1} \]
Furthermore, by the fact that $P_X^2\equiv\sum_{x_i=1} u^{2i}\pmod{2}$,
$P_A^2 + P_B^2 + P_C^2 + P_D^2$ is
congruent to
\[ \sum_{a_i=1} u^{2i} + \sum_{b_i=1} u^{2i} + \sum_{c_i=1} u^{2i} + \sum_{d_i=1} u^{2i} \pmod{2} . \tag{2}\label{eq:prodeq2} \]
Now, if $n$ is even then~\eqref{eq:prodeq1} reduces to
\[ (p_A+p_B+p_C+p_D)\sum_{i=0}^{n-1}u^i \pmod{2} \]
so all coefficients are the same mod $2$.  Since by~\eqref{eq:prodeq2} the
coefficients with odd index are $0$ mod $2$, all coefficients in~\eqref{eq:prodeq1}
and~\eqref{eq:prodeq2} must be $0$ mod $2$.

Note that $u^k=u^{2i}$ has exactly 2 solutions for given even~$k$ with $0\leq k<n$, namely,
$i=k/2$ and $i=(k+n)/2$.  Then~\eqref{eq:prodeq2} can be rewritten as
\[ \sum_{a_{k/2}=1} u^k + \sum_{a_{(k+n)/2}=1} u^k + \dotsb + \sum_{d_{(k+n)/2}=1} u^k \pmod{2} \]
where the sums are over the even $k$ with $0\leq k<n$.
Since each coefficient must be $0$ mod $2$, there must be an even number of $1$s
among the entries $a_{k/2}$, $a_{(k+n)/2}$, $\dotsc$, $d_{(k+n)/2}$ for each even $k$ with $0\leq k<n$, i.e.,
\[ a_{k/2}a_{(k+n)/2}b_{k/2}b_{(k+n)/2}c_{k/2}c_{(k+n)/2}d_{k/2}d_{(k+n)/2} = 1 . \]
The required result is a rearrangement of this and rewriting with
the definition $i=k/2$.
\end{proof}

Following~\cite[Def.~3]{dokovic2015compression}, the $2$\nobreakdash-compression
of the sequence $A=[a_0,\dotsc,a_{n-1}]$ of even length
$n=2m$ is the sequence $A'$ of length $m$ whose $i$th entry is $a'_i\coloneqq a_i+a_{i+m}$ for $0\leq i<m$.
This definition allows us to state Theorem~\ref{thm:willprodeven} in an alternative useful form.

\begin{corollary}\label{cor:willprod}
If\/ $A'$, $B'$, $C'$, $D'$ is a\/ $2$\nobreakdash-compression of a Williamson sequence of even order\/ $n=2m$
then\/ $A'+B'+C'+D'\equiv[0,\dotsc,0]\pmod{4}$.
\end{corollary}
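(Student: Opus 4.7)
The plan is to unwind the definition of 2-compression and then invoke Theorem~\ref{thm:willprodeven} directly. Fix $i$ with $0\leq i<m$. The $i$th entry of $A'+B'+C'+D'$ is
\[ a_i+a_{i+m}+b_i+b_{i+m}+c_i+c_{i+m}+d_i+d_{i+m}, \]
a sum of eight terms, each equal to $\pm1$. I therefore want to control the number of $-1$s among these eight terms modulo something useful.

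Next I would observe the general fact that if $x_1,\dots,x_8\in\{\pm1\}$ satisfy $\prod x_j=+1$, then the number of $-1$s among them is even, so $\sum x_j\in\{-8,-4,0,4,8\}$, i.e.\ $\sum x_j\equiv 0\pmod 4$. Thus it suffices to show that the product of the eight $\pm1$ values above equals $+1$.

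This is exactly what Theorem~\ref{thm:willprodeven} supplies: for $0\leq i<m$ we have $a_ib_ic_id_i=a_{i+m}b_{i+m}c_{i+m}d_{i+m}$, so their product (the product of all eight relevant entries) is $(a_ib_ic_id_i)^2=1$. Combining this with the preceding observation yields that the $i$th entry of $A'+B'+C'+D'$ is divisible by $4$, and since $i$ was arbitrary the corollary follows.

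I do not anticipate a real obstacle: once the entries of the compressed sequences are expanded, Theorem~\ref{thm:willprodeven} does all the work, and the only auxiliary fact needed is the elementary observation that a sum of eight $\pm1$ terms whose product is $+1$ is automatically a multiple of $4$.
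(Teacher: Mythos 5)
Your proof is correct and follows essentially the same route as the paper: both arguments expand the $i$th compressed entry as a sum of eight $\pm1$ terms, use Theorem~\ref{thm:willprodeven} to conclude the product of those eight entries is $+1$ (equivalently, that the count of $+1$s or of $-1$s is even), and deduce that the sum is $\equiv 0 \pmod 4$. The only cosmetic difference is that you count the $-1$s where the paper counts the $+1$s.
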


\begin{proof}
Let $N_+$ and $N_-$ denote the number of $1$s and $-1$s in the eight Williamson sequence entries
$a_i$, $b_i$, $c_i$, $d_i$, $a_{i+m}$, $b_{i+m}$, $c_{i+m}$, and $d_{i+m}$,
where $0\leq i<m$.  We have that $N_++N_-=8$ and that
$N_+-N_-=a'_i+b'_i+c'_i+d'_i$
(the sum of the above eight Williamson sequence entries).
Thus the $i$th entry of $A'+B'+C'+D'$ is
$N_+-N_-=N_+-(8-N_+)=2N_+-8\equiv0\pmod 4$ since
Theorem~\ref{thm:willprodeven} implies that $N_+$ must be even.
\end{proof}

\bibliography{williamson-product-theorem}
\bibliographystyle{aaai}

\end{document}